\newcommand{\C}{\mathbb{C}}
\newcommand{\N}{\mathbb{N}}
\newcommand{\R}{\mathbb{R}}
\newtheorem{theorem}{Theorem}[section]
\newtheorem{lemma}[theorem]{Lemma}
\newtheorem{corollary}[theorem]{Corollary}
\theoremstyle{definition}
\newtheorem{definition}[theorem]{Definition}
\newtheorem{remark}[theorem]{Remark}
\numberwithin{equation}{section}
\begin{document}
\baselineskip=17pt

\author{Daniel Barlet \\ Institut Elie Cartan, Universit\'e de Nancy,\\ Laboratoire de Math\'ematiques\\ B.P. 239, 54506 Vandoeuvre l\`es Nancy Cedex, France\\
E-mail: Daniel.Barlet@iecn.u-nancy.fr
\and Teresa Monteiro Fernandes\\ Centro de Matem\'atica e Aplica\c c\~oes Fundamentais\\ Departamento de Matem\'atica\\ da Faculdade de Ci\^encias da Universidade de Lisboa\\Edif\'icio C6, P.2, Campo Grande,\\ 1749-16 Lisboa, Portugal\\
E-mail: tmf@ptmat.fc.ul.pt}

\title{Grauert's theorem for subanalytic open sets in real analytic manifolds}
\date{}
\maketitle
\renewcommand{\thefootnote}{}

\footnote{2010 \emph{Mathematics Subject Classification}: Primary:32B20,14P15; Secondary: 32C05, 32C09.}

\footnote{\emph{Key words}: Grauert's theorem, subanalytic sets, Stein open sets.}

\renewcommand{\thefootnote}{\arabic{footnote}}
\setcounter{footnote}{0}

\newpage

\begin{abstract}
\noindent By  open neighbourhood  of an open subset $\Omega$ of $\mathbb{R}^n$ we mean an open subset $\Omega'$ of $\mathbb{C}^n$ such that $\mathbb{R}^n\cap\Omega'=\Omega.$ 
A well known result of H. Grauert 
  implies  that any  open subset of $\mathbb{R}^n$ admits a fundamental system of Stein open neighbourhoods in $\mathbb{C}^n$. Another way to state this property is to say that each open subset of $\mathbb{R}^n$ is Stein.\\
\noindent We shall prove a similar result in the subanalytic category, so, under the assumption that $\Omega$ is a subanalytic  open subset  in a paracompact real analytic manifold, 
we show that \ $\Omega$ \ admits a fundamental system of subanalytic Stein open neighbourhoods in any of its complexifications.
\end{abstract}

\section{Introduction.}

A classical result of H. Grauert gives that an open set in a real analytic manifold \ $M_{\mathbb{R}}$ \ is locally the trace on \ $M_{\mathbb{R}}$ \ of a Stein open set in any given complexification \ $M_{\mathbb{C}}$ \ of \ $M_{\mathbb{R}}$.\\
The analogous result in the semi-analytic setting is easy to obtain because when \ $f$ \ is a real analytic function, say near \ $0$ \ in \ $\mathbb{R}^n$, the set \ $\{ f > 0 \}$ \  is  near \ $0$ \ the trace on \ $\mathbb{R}^n$ \ on the Stein open set \ $\{\Re(f) > 0 \} $ \ cut with a small open ball in \ $\C^n$.\\
We solve the subanalytic case of this problem using the rather deep following result (Theorem \ref{L:1} below):
\begin{itemize}
\item a compact subanalytic set in \ $\mathbb{R}^n$ \ may be defined as  the zero set of a \ $\mathscr{C}^2$ \ subanalytic function on \ $\mathbb{R}^n$. 
\end{itemize}

\noindent The construction of the subanalytic Stein open subset we are looking for is then an easy consequence of H. Grauert's idea.

\noindent Let us mention without technical details  that applications of our result arise naturally in the theory of sheaves on subanalytic sites, as it has been developped by L. Prelli in \cite{LP} (cf. \cite{KS} for the foundations of the theory of ind-sheaves). It entails, for instance, that the subanalytic sheaf of tempered analytic functions on a real analytic manifold is concentrated in degree zero as in the classical case.

 \noindent We conclude this article by computing one very simple example which is
not semi-analytic in order to show that the subanalytic case is much more
involved and also to explain to non specialists of subanalytic geometry (as we are) what are the ideas and tools hidden behind this construction.\\

\noindent We wish to thank Adam Parusinski for having pointed out to us a precise reference of Theorem \ref{L:1}, and the referee for asking us about the  unbounded case.

\section{Main results and proofs}

We refer to\ \cite{BM}, \cite{MC}, \cite{KK} and  \cite{S}  for the basic material on subanalytic geometry.

The following result is due to Bierstone, Milman and Pawlucki in a private letter to W. Schmid and K.Vilonen in 1995  (cf.  \cite{SV}). We refer \cite{DM}, C.11, for a proof in the more general setting of o-minimal structures. 

\begin{theorem}\label{L:1}

Let  \ $A$ \  be a compact subanalytic set of \ $\mathbb{R}^n$ \ and let  \ $p\in\mathbb{N}$ \  be given. Then there exists a \  $\mathscr{C}^p$ subanalytic function $f$ in \  $\mathbb{R}^n$ \  such that \ $A=f^{-1}(0)$.
\end{theorem}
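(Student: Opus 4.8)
The plan is to reduce the statement to a regularity‑improvement problem for nonnegative continuous subanalytic functions and then to treat the latter by induction on dimension, the regularity improvement being the only genuinely delicate point. Since $A$ is compact it is closed, so $g(x):=d(x,A)^{2}$ is a nonnegative continuous function with $g^{-1}(0)=A$; it is subanalytic, because the distance to a compact subanalytic set is subanalytic (the infimum is attained over the compact set $A$, so the graph of $d(\cdot,A)$ is obtained from subanalytic data by proper projection and by complementation, operations preserving subanalyticity by Gabrielov's theorem; see \cite{BM}, \cite{S}), and $g(x)\ge(\Vert x\Vert-R)^{2}$ whenever $A\subset B(0,R)$, so $g$ is coercive. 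Hence it suffices to prove: \emph{for every nonnegative continuous coercive subanalytic function $g$ on $\mathbb{R}^{n}$ and every $p\in\mathbb{N}$ there is a nonnegative $\mathscr{C}^{p}$ subanalytic function $f$ with $f^{-1}(0)=g^{-1}(0)$}. Coercivity disposes of the behaviour at infinity: one may perform the construction over a large closed ball containing $g^{-1}(0)$ and patch in a positive constant outside a slightly larger ball.

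\medskip
\noindent I would prove this last statement by induction on $n$ (the case $n=0$ being trivial), using at the inductive step that a closed subanalytic subset of $\mathbb{R}^{n}$ of dimension $<n$ can, by a subanalytic triangulation, be reduced chart by chart to a closed subanalytic subset of $\mathbb{R}^{n-1}$, to which the hypothesis applies; it is convenient to also reduce $A=g^{-1}(0)$, via a finite analytic subanalytic stratification $A=\bigsqcup_{j}A_{j}$ and the product $\prod_{j}f_{j}$, to building each $f_{j}$ with $f_{j}^{-1}(0)=\overline{A_{j}}$. Now $g$ is real‑analytic outside a closed subanalytic set $\Sigma$ of dimension $<n$ (every subanalytic function is analytic off such a set), and one may enlarge $\Sigma$, keeping $\dim\Sigma<n$, so that it contains everything that could be troublesome. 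The key step is to repair $g$ across $\Sigma$: by the Łojasiewicz inequality the partial derivatives of $g$ up to order $p$ blow up near $\Sigma$, on a fixed ball, at most like a fixed negative power of $d(\cdot,\Sigma)$; choosing, by the inductive hypothesis, a nonnegative $\mathscr{C}^{p}$ subanalytic function $\sigma$ with $\sigma^{-1}(0)=\Sigma$ (whose derivatives up to order $p$ are, again by Łojasiewicz, bounded near $\Sigma$ by fixed positive powers of $d(\cdot,\Sigma)$), one gets that for $N$ large every partial derivative of $g\,\sigma^{N}$ of order $\le p$ is bounded near $\Sigma$ by an arbitrarily high positive power of $d(\cdot,\Sigma)$, so that $g\,\sigma^{N}$ extends across $\Sigma$ as a $\mathscr{C}^{p}$ function with all derivatives of order $\le p$ vanishing on $\Sigma$. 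This is precisely the device by which $\mathscr{C}^{p}$ bump functions are manufactured in Grauert's and Hörmander's constructions; the relevant semialgebraic model is $t\mapsto t^{N}$ for $t\ge0$ and $t\mapsto0$ for $t\le0$, which is of class $\mathscr{C}^{N-1}$.

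\medskip
\noindent The function $g\,\sigma^{N}$ so obtained is $\mathscr{C}^{p}$, subanalytic and nonnegative, but its zero set is $g^{-1}(0)\cup\Sigma$, which in general is strictly larger than $g^{-1}(0)$; one must therefore add a correction term, a nonnegative $\mathscr{C}^{p}$ subanalytic function that vanishes on $g^{-1}(0)$ and is positive on $\Sigma\setminus g^{-1}(0)$, again drawn from lower‑dimensional instances of the problem, and it is in organizing these corrections that the induction must be set up with real care so as to remain well‑founded. Iterating down the dimension and then forming $\prod_{j}f_{j}$, starting from $g=d(\cdot,A)^{2}$, yields the desired nonnegative $\mathscr{C}^{p}$ subanalytic $f$ with $f^{-1}(0)=A$. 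Subanalyticity is preserved at every step, since all the operations used — distance functions, finite sums and products, composition with $t\mapsto t^{N}$, passage to the singular or non‑analytic locus, finite stratification and triangulation — are internal to the subanalytic category.

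\medskip
\noindent The step I expect to be the main obstacle is exactly the interplay of these constraints at the inductive step: gaining enough differentiability by absorbing the Łojasiewicz blow‑up of the derivatives of $g$ into a high power of a regularized distance to the bad locus $\Sigma$, while simultaneously restoring the zero set to be precisely $g^{-1}(0)$ and keeping the dimension induction well‑founded (one must fix the right induction parameter, reduce $\Sigma\subset\mathbb{R}^{n}$ to the lower ambient dimension cleanly, and track the Łojasiewicz exponents and the powers $N$ accordingly). By contrast, the reduction to the distance function, the subanalyticity of all the functions involved, the existence of finite analytic subanalytic stratifications and subanalytic triangulations compatible with a given subanalytic set, and the elementary differentiability properties of $t\mapsto t^{N}_{+}$ are all standard. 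It is also worth noting that this scheme can only produce a $\mathscr{C}^{p}$ function for finite $p$ — the model $t\mapsto t^{N}_{+}$ is never $\mathscr{C}^{\infty}$ — in agreement with the hypothesis of the theorem, and indeed a subanalytic defining function of class $\mathscr{C}^{\infty}$ need not exist.
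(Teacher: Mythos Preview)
The paper does not contain a proof of this theorem. It is quoted as a known result due to Bierstone, Milman and Paw{\l}ucki (in a 1995 private letter to Schmid and Vilonen), and the reader is referred to \cite{DM}, C.11, for a proof in the o-minimal setting. So there is no ``paper's own proof'' to compare your proposal against; the authors use Theorem~\ref{L:1} as a black box.

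That said, the paper does illustrate, in Section~3, the mechanism behind the result on one explicit (non semi-analytic) example, and the spirit of that illustration matches your sketch: starting from a bounded subanalytic function that is smooth off a lower-dimensional bad locus and has the right zero set, one first makes it continuous by multiplying by a function vanishing on the bad locus, and then (Corollaries~\ref{C1} and~\ref{C2}) raises it to a high enough power, using {\L}ojasiewicz inequalities and Kurdyka's theorem on strong subanalyticity of derivatives, to force the derivatives to extend continuously across the boundary. Your multiplication by $\sigma^{N}$ and appeal to {\L}ojasiewicz to tame the blow-up of derivatives is exactly this idea.

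Where your outline is genuinely soft is the induction. You induct on the ambient dimension $n$, but the function $\sigma$ you need has zero set $\Sigma\subset\mathbb{R}^{n}$, so you are back to the same ambient dimension; the phrase ``reduced chart by chart to a closed subanalytic subset of $\mathbb{R}^{n-1}$'' does not by itself produce a $\mathscr{C}^{p}$ subanalytic function on $\mathbb{R}^{n}$ vanishing exactly on $\Sigma$. The usual way out (and this is what is done in \cite{DM}) is to induct on $\dim\Sigma$ (or on $\dim A$) rather than on $n$: after multiplying by $\sigma^{N}$ the new bad locus has strictly smaller dimension, and one iterates. Your ``correction term'' step is then subsumed in this descent. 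If you reorganize the induction accordingly, your sketch becomes the standard argument.
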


\begin{remark} Let \ $U$ \ be a open ball in \ $\R^n$ \ and \ $Z $ \ a relatively compact subanalytic open set in \ $U$. Then there exists a \ $\mathscr{C}^2$ \ subanalytic function \ $g : \R^n \to \R^+$ \ with compact support in \ $U$ \ such that 
 $$ Z = \{ x \in \R^n \ ; \ g(x) > 0 \} .$$
  Apply the previous theorem to \ $\bar U \setminus Z$  \ and define \ $g$ \ to be \ $f$ \ on \ $U$ \ and \ $0$ \ on \ $\R^n \setminus U$. As \ $U$ \ is subanalytic and \ $f$ \ identically zero around \ $\partial U$, this function \ $g$ \ satifies the required properties.\\
  Moreover, we can divide this function \ $g$ \ by any given positive constant without changing the set \ $Z$, so for each \ $\varepsilon > 0$ \ we may assume that the Levi form of \ $g$ \ is uniformely bounded on \ $\R^n$ \ by \ $\varepsilon.\vert\vert z \vert\vert^2$.
\end{remark}


\begin{corollary}\label{P:123}
Let \ $\Omega$  be a  subanalytic open set in a real paracompact  analytic manifold \ $M_{\R}$. Then, for any complexification \ $M_{\C}$\ of \ $M_{\R}$, and for any given smooth  hermitian metric on the  complex tangent bundle on \ $M_{\C}$ \   there exists a subanalytic non negative real function \ $f$ on \ $M_{\C}$\ of class \ $\mathscr{C}^2$ such that  
$$\{f > 0\}\cap M_{\R} = \Omega$$
and such that the Levi form of \ $f$ \  is bounded by the given hermitian metric.\\
Moreover, \ $f$\ can be chosen so that \ $Supp f$\  is contained in any given open set in \ $M^{\C}$\ containing 
the closed set \ $\bar{\Omega}$.

\end{corollary}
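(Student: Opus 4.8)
\medskip
\noindent\emph{Sketch of proof.} The plan is to glue together local models furnished by the Remark over a locally finite cover of $\bar\Omega$ by charts adapted to $M_\R$, and then to rescale each local piece so that the sum of the Levi forms stays below the prescribed metric. Fix the neighbourhood $W\supset\bar\Omega$ in $M_\C$; since $M_\R$ is closed in $M_\C$ we have $\bar\Omega\subset M_\R\subset W$. Using paracompactness of $M_\C$ and the fact that $M_\C$ is a complexification of $M_\R$, I would first choose a locally finite (in $M_\C$) family of holomorphic coordinate charts $\psi_i\colon\tilde U_i\to B(0,3)\subset\C^n$ $(i\in I)$, biholomorphic onto the ball $B(0,3)$, each adapted to $M_\R$ (i.e. $\psi_i(\tilde U_i\cap M_\R)=B(0,3)\cap\R^n$) and with $\tilde U_i\subset W$, such that the smaller sets $U_i:=\psi_i^{-1}(B(0,1))$ already cover $\bar\Omega$. (Routine: refine the open cover of $M_\C$ formed by $M_\C\setminus\bar\Omega$ together with the radius-$1$ balls of such adapted charts contained in $W$, then shrink.)

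\medskip
\noindent Next, the local models. For each $i$ put $Z_i:=\psi_i(\Omega\cap\tilde U_i\cap M_\R)\cap(B(0,1)\cap\R^n)$, a subanalytic open subset of $\R^n$ whose closure is compact and contained in $B(0,1)\cap\R^n$. Writing $\C^n\cong\R^{2n}=\R^n_x\times\R^n_y$, the ``thickened'' set $W_i:=Z_i\times\{y\in\R^n:\|y\|<1\}$ is a relatively compact subanalytic open subset of $B(0,3)\subset\C^n$ with $W_i\cap\R^n=Z_i$. Applying the Remark inside $\R^{2n}$ (ambient ball $B(0,3)$, set $W_i$ in the role of $Z$) yields a $\mathscr C^2$ subanalytic function $\tilde h_i\colon\C^n\to\R^+$ with $\{\tilde h_i>0\}=W_i$ and compact support inside $B(0,3)$. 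Since $\psi_i$ is a biholomorphism onto $B(0,3)$, the function $h_i^0$ on $M_\C$ equal to $\tilde h_i\circ\psi_i$ on $\tilde U_i$ and to $0$ elsewhere is $\mathscr C^2$, non-negative and subanalytic, with compact support $K_i\subset\tilde U_i\subset W$ and $\{h_i^0>0\}\cap M_\R=\psi_i^{-1}(Z_i)=\Omega\cap U_i\cap M_\R$. The point to retain is that $K_i$ depends only on the chosen charts and on $\Omega$.

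\medskip
\noindent Now I would rescale and sum. The family $\{K_i\}_{i\in I}$ is locally finite in $M_\C$; set $n_i:=\max_{p\in K_i}\#\{j\in I:p\in K_j\}$, which is finite ($K_i$ is compact and $p\mapsto\#\{j:p\in K_j\}$ is upper semicontinuous, its sublevel complements being locally finite unions of closed sets). On $K_i$ the continuous Hermitian form $\mathrm{Lev}(h_i^0)$ is dominated by $M_i\,h$ for some finite $M_i$, and it vanishes off $K_i$; hence $h_i:=h_i^0/(n_iM_i+1)$ is again $\mathscr C^2$, non-negative and subanalytic, with $\{h_i>0\}=\{h_i^0>0\}$ and $\mathrm{Lev}(h_i)\le(1/n_i)\,h$ everywhere. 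There is no circularity here, since $K_i$, $n_i$, $M_i$ are all fixed before the rescaling. Put $f:=\sum_{i\in I}h_i$. Near each point this is a finite sum of $\mathscr C^2$ subanalytic functions, so $f$ is $\mathscr C^2$, subanalytic and $\ge 0$, with $\mathrm{supp}\,f\subset\bigcup_iK_i\subset W$ (a locally finite union of closed sets, hence closed), which is the last assertion. Since each $\{h_i>0\}\cap M_\R=\Omega\cap U_i\cap M_\R\subset\Omega$ and $\{U_i\}$ covers $\bar\Omega\supset\Omega$, we get $\{f>0\}\cap M_\R=\bigcup_i(\Omega\cap U_i\cap M_\R)=\Omega$. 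Finally, at any $p$ only finitely many $h_i$ are non-zero near $p$, so $\mathrm{Lev}(f)(p)=\sum_{i:\,p\in K_i}\mathrm{Lev}(h_i)(p)\le\sum_{i:\,p\in K_i}(1/n_i)\,h(p)\le h(p)$, because $p\in K_i$ forces $n_i\ge\#\{j:p\in K_j\}$.

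\medskip
\noindent The delicate point is exactly this last estimate: one must control the sum of infinitely many Levi forms over a cover that is only locally finite — hence of possibly unbounded order — keeping the total below the given metric, and do so without letting the rescaling constants feed back into the data used to choose them. Isolating the fixed compact supports $K_i$ before any rescaling, together with the elementary ``$1/n_i$'' summation trick (equivalently: first refine the cover so that its order is $\le\dim_\R M_\C+1$ and then rescale by a single constant), is what makes it go through; everything else reduces to the Remark or to standard facts about paracompactness and subanalytic functions.
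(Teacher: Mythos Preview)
Your argument is correct and follows the same overall strategy as the paper: cover $\bar\Omega$ by adapted charts, use the Remark to build local $\mathscr{C}^2$ subanalytic bump functions, rescale, and take the locally finite sum. Your thickening step $W_i=Z_i\times\{\|y\|<1\}$ makes explicit how the Remark (stated for $\R^n$) is applied in $\C^n\cong\R^{2n}$, something the paper leaves implicit.

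The only substantive difference is the rescaling. The paper simply enumerates the cover and demands that the Levi form of $f_i$ be bounded by $h/2^i$; then $\sum_i 2^{-i}\le 1$ gives the global bound with no further work, regardless of the order of the cover. Your $1/n_i$ trick based on the local multiplicity of the $K_i$'s is perfectly valid, but the point you single out as ``delicate'' --- controlling the sum over a cover of unbounded order --- is in fact bypassed entirely by the paper's geometric-series rescaling, which is shorter and avoids the auxiliary quantities $n_i$, $M_i$.
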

\begin{proof}

For \ $\epsilon>0$, let us denote \ $B_{\epsilon}$\  an open ball of \ $\mathbb{R}^n$ \ of radius \ $\epsilon$\ and  by \ $B_{\epsilon}^{\C}$\  the corresponding ball  in  \ $\C^n$.

 \noindent For each \ $p\in\bar{\Omega}$\ (the closure of \ $\Omega$)  there exists two relatively  compact open subanalytic neighbourhoods \ $V \subset\subset U$ \ of \ $p$ \ in \ $M_{\C}$\ and a complex analytic isomorphism \ $\varphi$ \ defined in an open  neighbourhood $W$ of  \ $\bar U$ \ to an open ball \ $B^{\C}_{\epsilon}$\, such that \ $\varphi(\bar V)$ \ is the closed ball $\bar{B}^{\C}_{\epsilon/2}$, and  \ $\varphi$ is real  on \ $W\cap M_{\mathbb{R}}$. In particular, \ $\bar V\cap M_{\mathbb{R}} \subset U$\  is a compact  subanalytic subset, and \ $\bar U$ \ is a compact subanalytic subset of \ $W$.\\
As \ $M_R$ \ is paracompact, we get a locally finite  countable cover \   $  (U_i)_{i \in \mathbb{N}^*}$ \  of \ $\bar \Omega$ \ such that  the conditions above are satified.  On each \ $U_i$, by the remark following the Theorem \ref{L:1}, we may choose a \ $\mathscr{C}^2$ non negative subanalytic function \ $f_i$\ on \ $M_{\C}$ \   with compact support  in \ $U_i$ \ whose non  zero set is exactly \ $V_i\cap \Omega$, and such that its Levi form  is bounded by \ $h/2^i$ \ for any given hermitian metric \ $h$ \ on \ $M_{\C}$.  Then   define \ $f : = \sum_{i=1}^{\infty} \ f_i$. As this sum is locally finite, it clearly satisfies our requirements.\\
The last assertion follows by applying this construction in any open neighbourhood \ $W$ \ of \ $\bar \Omega$ \ in \ $M_{\C}$ \ regarded as a complexification of \ $W \cap M_{\R}$. \end{proof}


\bigskip

\begin{theorem}\label{T:2}
Let \ $\Omega$ \ be a subanalytic open set of a real paracompact analytic  manifold \ $M_{\mathbb{R}}$. Then, given a complexification\  $M_{\C}$\ of  \ $M_{\mathbb{R}}$, there exists a 
 \ subanalytic Stein  open subset \ $\Omega_{\C}$\ of \ $M_{\C}$ \, such that   
\begin{equation}\label{E:1}
\Omega=\Omega_{\C} \cap \, M_{\mathbb{R}}
\end{equation}
\end{theorem}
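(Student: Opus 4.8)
The plan is to use Corollary \ref{P:123} to produce the plurisubharmonic exhaustion function we need, and then invoke Grauert's solution of the Levi problem together with a cut-off-by-exhaustion argument. First I would fix a complexification $M_\C$ and a smooth hermitian metric $h$ on the complex tangent bundle of $M_\C$. By Corollary \ref{P:123} there is a $\mathscr{C}^2$ subanalytic function $f\geq 0$ on $M_\C$ with $\{f>0\}\cap M_\R=\Omega$, with $\mathrm{Supp}\, f$ contained in a prescribed neighbourhood of $\bar\Omega$, and with Levi form bounded by $h$; the freedom in the bound is what will let us dominate a fixed error term later. The candidate for $\Omega_\C$ will be an open set of the form $\{z\in W : f(z)>\psi(z)\}$ for a suitable auxiliary function $\psi$, where $W$ is an open neighbourhood of $\bar\Omega$ on which we have good control; note that on $M_\R$ this set meets $M_\R$ in $\{f>\psi\}\cap M_\R$, so we must arrange $\psi<0$ (or $\psi\leq 0$ with care) exactly where $f>0$ on $M_\R$ and $\psi\geq f$ elsewhere on $M_\R$, so that the trace is precisely $\Omega$.

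Next I would address Steinness. Recall that a complex manifold is Stein iff it admits a $\mathscr{C}^2$ strictly plurisubharmonic exhaustion function (Grauert). So on $\Omega_\C$ I want to build such a function. The natural choice is $\varphi=-\log(f-\psi)+\chi$, where $\chi$ is a plurisubharmonic exhaustion of the ambient piece we are working in: the $-\log$ term blows up at the boundary $\{f=\psi\}$ and hence gives properness near that part of $\partial\Omega_\C$, while $\chi$ controls the part of the boundary coming from "infinity" in $M_\C$. The key computation is the Levi form of $-\log(f-\psi)$: it equals $-\frac{L(f-\psi)}{f-\psi}+\frac{|\partial(f-\psi)|^2}{(f-\psi)^2}$; the second (squared-gradient) term is always $\geq 0$, so strict plurisubharmonicity will hold provided $L(f-\psi)\leq 0$, i.e. provided the Levi form of $\psi$ dominates that of $f$. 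Since by Corollary \ref{P:123} we may take the Levi form of $f$ as small as we like relative to $h$, and since $\psi$ can be chosen (locally, then globally via a partition of unity) to be strictly plurisubharmonic with Levi form $\geq h$ on the relevant region, this is exactly the inequality we can enforce. One then adds a large multiple of a genuine strictly plurisubharmonic exhaustion on $M_\C$ (available since, shrinking $W$ if necessary, $W$ can be taken Stein, or one works inside coordinate charts and glues) to absorb any remaining negativity and to get properness, obtaining a $\mathscr{C}^2$ strictly psh exhaustion of $\Omega_\C$.

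To make this globally coherent on the paracompact manifold, I would run the construction chart by chart exactly as in the proof of Corollary \ref{P:123}: take the locally finite cover $(U_i)$, on each $U_i\cong B^\C_{\varepsilon_i}$ carry out the Euclidean version of the above (this is precisely Grauert's original idea for $\R^n\subset\C^n$: shrink to $\{\Re(\text{something})>0\}$ intersected with a ball, then correct), and patch the resulting local data — the defining functions and the local psh exhaustions — using a subanalytic partition of unity subordinate to $(U_i)$, with the $2^{-i}$-type bounds on Levi forms ensuring the sum of the error terms stays dominated by $h$. Subanalyticity of $\Omega_\C$ is then automatic: it is cut out by finitely many (locally) $\mathscr{C}^2$ subanalytic inequalities $f>\psi$ with $f$, $\psi$ subanalytic, and the cover is locally finite.

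The main obstacle I expect is the interplay between three competing requirements on $\Omega_\C$: (i) its trace on $M_\R$ must be \emph{exactly} $\Omega$, not merely contain or be contained in it, which forces delicate sign control of $f-\psi$ on the real locus, especially along $\partial\Omega\subset M_\R$ where $f$ vanishes to some order; (ii) strict plurisubharmonicity of the exhaustion, which as noted needs the Levi form of the "subtracted" function $\psi$ to beat that of $f$ uniformly — manageable thanks to the $\varepsilon$-smallness clause in Corollary \ref{P:123}, but requiring care near the boundary where $f-\psi$ is small and the $1/(f-\psi)^2$ factor amplifies everything; and (iii) properness/exhaustion at the part of $\partial\Omega_\C$ lying over $\partial\Omega$ versus the part escaping to the boundary of the ambient Stein neighbourhood $W$ — these two mechanisms (the $-\log$ singularity and the ambient exhaustion $\chi$) must be combined without one destroying the other. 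Handling all of this simultaneously while keeping every ingredient subanalytic and $\mathscr{C}^2$ is the technical heart of the argument, though conceptually it is, as the authors say, "an easy consequence of H. Grauert's idea."
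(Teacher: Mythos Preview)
Your overall strategy is exactly the paper's: take the $f$ from Corollary \ref{P:123}, subtract it from a strictly plurisubharmonic function $\psi$ vanishing on $M_\R$ whose Levi form dominates that of $f$, and set $\Omega_\C=\{f>\psi\}$. The difference is in how you manufacture $\psi$. The paper does this in one stroke: by Grauert's embedding theorem there is a proper real-analytic embedding $\varphi:M_\R\hookrightarrow\R^N$; complexify it to $\varphi_\C:V\to\C^N$ of maximal rank on a neighbourhood $V$ of $M_\R$, and pull back $g(z)=\sum_j(\Im z_j)^2$. The function $\varphi_\C^*(g)$ is then global, real-analytic (hence subanalytic), strictly plurisubharmonic, and vanishes exactly on $M_\R$. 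All three of your ``competing requirements'' dissolve simultaneously: the trace condition is automatic since $\varphi_\C^*(g)|_{M_\R}=0$; the Levi inequality holds by choosing $h$ with $L(\varphi_\C^*(g))\ge 2h$ and then invoking Corollary \ref{P:123} with bound $h$; and no separate exhaustion argument is needed because $\varphi_\C^*(g)-f$ is already globally $\mathscr{C}^2$ strictly plurisubharmonic on $V$, so its negative sublevel set is strongly $1$-complete, hence Stein.

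Your alternative---building $\psi$ chart by chart and patching with a subanalytic partition of unity---has a genuine gap as written. A partition-of-unity combination $\sum\rho_i\psi_i$ is not in general plurisubharmonic: the Levi form picks up terms $2\Re(\partial\rho_i\wedge\bar\partial\psi_i)+\psi_i\,L(\rho_i)$ which can be very negative, and your $2^{-i}$ bounds control the Levi form of $f$, not these cross terms. (The rescue, which you do not mention, is that $\psi_i$ and $d\psi_i$ vanish on $M_\R$, so the error terms vanish there too and one can shrink to a thin enough tube around $M_\R$; this is essentially reproving Grauert's neighbourhood theorem by hand.) Likewise, your $-\log(f-\psi)+\chi$ construction is correct but superfluous once you have a global strictly psh $\psi-f$: the sublevel set $\{\psi-f<0\}$ is already what Grauert's Levi-problem theorem needs. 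In short, the missing idea is the use of the global embedding $M_\R\hookrightarrow\R^N$ to produce $\psi$ directly; once you have that, the proof is two lines.
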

\begin{proof}
Let \ $n$ be the dimension of \ $M_{\R}$. By Grauert's Theorem 3, page \ $470$\ of \cite{G}, there exist a natural number \ $N\in\N$\ and a real  analytic regular proper embedding \ $\varphi$ of \ $M_{\R}$ \ in the euclidean space \ $\R^N$. By complexification,  one defines a holomorphic map \ $\varphi_{\C}$\ in a neighborhood \ $V$\ of \ $M_{\R}$\ in \ $M_{\C}$  taking values in \ $\C^N$,  such that \ $\varphi_{\C}|_{M_{\R}}=\varphi$ \  and such that the rank of \ $\varphi_{\C}$ is everywhere equal to \ $n$.



 Note that  the Levi form of the real analytic function 
  $$ g(z_1,..., z_N)= \sum_{j=1}^N \quad  ({\Im}\ z_j)^2  $$
   is half the square norm in \ $\C^N$, hence \ $g$ is strictly plurisubharmonic on \ $\C^N$. By the maximality of the rank of \ $\varphi_{\C}$,  the function $\varphi_{\C}^*(g)$  is  also strictly plurisubharmonic on \ $V$ and subanalytic (in fact analytic). \\
   Fix now a smooth hermitian metric\footnote{for instance 1/2 of the Levi form of \ $\varphi_{\C}^*(g)$  \ may be choose as K{\"a}hler form on \ $V$.} \ $h$ \ on \ $T_{\C}V$ \ such that  the Levi form of \ $\varphi_{\C}^*(g)$  \ is bigger at each point than \ $2.h$.\\
By Proposition \ref{P:123}, there exists a subanalytic \ $\mathscr{C}^2$\ non negative function $f$ with  support on \ $V$ such that 
$$\{f>0\} \cap M_{\R}=\Omega $$
and such that the Levi form of \ $f$ \ is bounded by \ $h$. So the Levi form of the \ $\mathscr{C}^2$\ subanalytic function 
 $$ \psi : = \varphi_{\C}^{*}(g)-f$$
  is positive definite at each point of \ $V$. It follows that the open set 
 $$\Omega_{\C}=\{\psi <0\} \cap V$$ 
 is (strongly 1-complete)  Stein by Grauert's famous result and subanalytic  in \ $M_{\C}$ \ by construction. 
 
 Moreover, as we have \ $\varphi_{\C}^{*}(g) = 0 $ \   in \ $ M_{\R}$, it follows that \ $\Omega_{\C} \cap M_{\R} = \Omega$. 
 \end{proof}






\section{Example:  
A strange four-leaved trefoil}

Our aim is now to give an explicit construction of the function \ $f$ \ in Theorem \ref{L:1} in the case of one of the simplest example which is not semi-analytic. For that purpose we shall only use {\L}ojasiewicz inequalities and Theorem \ref{deriv.} which are basic tools in subanalytic geometry. We think that this analysis will convince the reader of the strength and usefulness of Theorem \ref{L:1} and  that this tool is far from being elementary.\\

 \noindent We shall need the following refinement of subanalyticity.

\subsection{Strong subanalyticity}
For a continuous function \ $f : \mathbb{R}^n \to \mathbb{R}$ \ to be subanalytic simply means that its graph is a subanalytic set in \ $\mathbb{R}^n \times \mathbb{R}$, but in the non continuous case we shall use a stronger assumption, in order to control the behaviour of the graph near points where \ $f$ \ is not locally bounded. We restrict ourself to the context of the situation we need here.

\begin{definition}\label{strongly sub.}
Let \ $\Omega \subset\subset \mathbb{R}^n$ \ a relatively compact subanalytic open set, and let
$$ f : \Omega \to \mathbb{R} $$
be a continuous function. We shall say that \ $f$ \ is {\bf strongly subanalytic} if the function \ $\tilde{f} : \mathbb{R}^n \to \mathbb{R} $ \ defined by extending \ $f$ \ by \ $0$ \ on \ $\mathbb{R}^n \setminus \Omega$ \ has a subanalytic graph in \ $\mathbb{R}^n \times \mathbb{P}_1$, where \ $\mathbb{P}_1$ \ is the \ $1-$dimensional projective space \ $\mathbb{R} \cup \{\infty\}$.
\end{definition}

\noindent It is easy to see that such a condition implies that the growth of \ $f$ \ near a boundary point in \ $\partial \Omega$ \ has to be bounded by some power of the function \ $d(x, \partial \Omega)$ \ thanks to {\L}ojasiewicz inequalities (\cite{BM}).\\
Remark that if \ $\tilde{f}$ \ is continuous this condition reduces to the usual subanalyticity of the graph of \ $\tilde{f}$ \  in \ $\mathbb{R}^n\times \mathbb{R}$.\\

\noindent We shall need also the following theorem (cf.\cite{KK},  Theorem (2.4)).

\begin{theorem}\label{deriv.}
Let \ $\Omega \subset\subset \mathbb{R}^n$ \ a relatively compact subanalytic open set, and let
$$ f : \Omega \to \mathbb{R} $$
be a \ $\mathscr{C}^1$ \   function which is strongly subanalytic. Then any partial derivative of \ $f$ \ in \ $\Omega$ \ is also strongly subanalytic.
\end{theorem}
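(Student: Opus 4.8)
The statement to prove is Theorem~\ref{deriv.}: if $\Omega\subset\subset\mathbb{R}^n$ is a relatively compact subanalytic open set and $f:\Omega\to\mathbb{R}$ is a $\mathscr{C}^1$ function that is strongly subanalytic (in the sense of Definition~\ref{strongly sub.}), then every partial derivative $\partial f/\partial x_j$ is again strongly subanalytic. The plan is to reduce the assertion to a statement purely about graphs of subanalytic sets and then to quote the cited structure theorem from \cite{KK}. So first I would fix $j$ and form $g:=\partial f/\partial x_j:\Omega\to\mathbb{R}$, which is a well-defined continuous function on $\Omega$ since $f$ is $\mathscr{C}^1$; the task is to show that its extension-by-zero $\tilde g$ has subanalytic graph in $\mathbb{R}^n\times\mathbb{P}_1$.

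The key step is to show that $g$ itself (not yet extended) has subanalytic graph in $\Omega\times\mathbb{R}$. For this I would work inside the ambient subanalytic set: the graph $\Gamma_{\tilde f}\subset\mathbb{R}^n\times\mathbb{P}_1$ is subanalytic by hypothesis, hence so is its intersection with $\Omega\times\mathbb{R}$, which is exactly the ordinary graph $\Gamma_f$. Now $g(x)$ is obtained from $f$ by a limiting difference-quotient process, $g(x)=\lim_{t\to 0}(f(x+te_j)-f(x))/t$; the difference quotient, as a function of $(x,t)$ on the subanalytic open set $\{(x,t)\mid x\in\Omega,\ x+te_j\in\Omega,\ t\neq 0\}$, has subanalytic graph because $\Gamma_f$ is subanalytic and subanalytic sets are stable under the evident algebraic operations (pullback by the affine maps $x\mapsto x+te_j$, subtraction, division by $t\neq 0$). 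One then needs that the graph of the pointwise limit is subanalytic: this is where the cited Theorem~(2.4) of \cite{KK} enters — the point of invoking it, rather than re-proving, is precisely that closure/limit operations on subanalytic graphs of $\mathscr{C}^1$ functions stay subanalytic, which is not a formal consequence of the defining axioms of subanalytic sets (taking closures can leave the subanalytic category in general; it is the $\mathscr{C}^1$ regularity plus the control near $\partial\Omega$ that rescues it).

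Having $\Gamma_g\subset\Omega\times\mathbb{R}$ subanalytic, I would then pass to $\tilde g$ on all of $\mathbb{R}^n\times\mathbb{P}_1$. Subanalyticity being a local property on $\mathbb{R}^n\times\mathbb{P}_1$, the only issue is at points of $\partial\Omega\times\mathbb{P}_1$. There the strong subanalyticity of $f$ gives, via {\L}ojasiewicz inequalities, a bound $|f(x)|\leq C\,d(x,\partial\Omega)^{-\alpha}$ on a neighbourhood of each boundary point, and I expect the analogous bound for $g$ to follow by differentiating, with the exponent increased by one — this controls the closure of $\Gamma_g$ inside $\mathbb{R}^n\times\mathbb{P}_1$ and shows that $\overline{\Gamma_g}\cup(\mathbb{R}^n\setminus\Omega)\times\{0\}$ (which is $\overline{\Gamma_{\tilde g}}$ away from relatively negligible pieces, and in fact equals $\Gamma_{\tilde g}$ up to the graph over $\partial\Omega$) is subanalytic. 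One must be a little careful that the closure being taken in the compact fibre $\mathbb{P}_1$ rather than in $\mathbb{R}$ is what makes the set bounded and keeps it subanalytic; this is exactly the reason Definition~\ref{strongly sub.} is phrased with $\mathbb{P}_1$.

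\textbf{Main obstacle.} The genuinely delicate point is the passage from "the difference quotients form a subanalytic family" to "the pointwise limit $g$ has subanalytic graph": naive closure of a subanalytic family need not be subanalytic, and the behaviour near $\partial\Omega$ (where $f$ and hence its difference quotients may blow up) must be controlled quantitatively. I would handle this by invoking Theorem~(2.4) of \cite{KK} as a black box for the interior regularity and using the {\L}ojasiewicz growth estimate to handle the boundary; combining the two local pictures via the fact that subanalyticity is local and closed under locally finite unions then finishes the proof. I do not expect any obstruction from the algebra of the operations themselves — pullback by affine maps, arithmetic, and restriction are all standard stability properties of subanalytic sets.
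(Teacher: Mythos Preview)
The paper does not give a proof of this theorem at all: it is stated with the parenthetical ``(cf.\ \cite{KK}, Theorem~(2.4))'' and nothing more. In other words, the paper's entire argument \emph{is} the citation to Kurdyka. There is thus no ``paper's own proof'' for you to be compared against, and your sketch should be read as an attempt to supply what the paper deliberately outsources.

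That said, your proposal has a structural problem: it is circular. At the crucial step --- passing from the subanalytic family of difference quotients to the subanalyticity of the graph of the pointwise limit $g=\partial f/\partial x_j$ --- you write ``this is where the cited Theorem~(2.4) of \cite{KK} enters'' and invoke it as a black box. But Theorem~(2.4) of \cite{KK} is precisely the result the paper is labelling as Theorem~\ref{deriv.}; it is not an ingredient available to you, it is the statement you are trying to prove. Once that black box is removed, your sketch contains no mechanism for the hard step.

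Two smaller points. First, your remark that ``taking closures can leave the subanalytic category in general'' is misleading in this bounded setting: the closure of a relatively compact subanalytic set is subanalytic, so the difficulty with limits is not a failure of closure but rather that the graph of a pointwise limit need not be the closure (or a nice slice of the closure) of the family. Second, the boundary argument --- ``I expect the analogous bound for $g$ to follow by differentiating'' the {\L}ojasiewicz estimate $|f(x)|\le C\,d(x,\partial\Omega)^{-\alpha}$ --- is not an argument: one cannot differentiate a pointwise inequality, and producing a polynomial growth bound on $\partial f/\partial x_j$ near $\partial\Omega$ is exactly the substance of Kurdyka's theorem, not a consequence of it.
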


\noindent Since, in Definition \ref{strongly sub.},  the continuity of \ $\tilde{f}$ \ just means that \ $f(x)$ \ goes to \ $0$ \ when  \ $x \in \Omega$ \ goes to  the boundary \ $\partial \Omega$, using {\L}ojasiewicz inequalities we easily obtain the following corollary:

\begin{corollary}\label{C1}
 In the situation of the previous theorem, assume that \ $\tilde{f}$ \ is continuous. Then there exists an integer \ $N_1$ \ such that \ $\tilde{f}^{N_1}$ \ is \ $\mathscr{C}^1$ \ on \ $\mathbb{R}^n$ \ and subanalytic.
\end{corollary}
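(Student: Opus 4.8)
The plan is to combine the derivative-regularity statement of Theorem~\ref{deriv.} with a {\L}ojasiewicz-type estimate for the way $\tilde f$ and its first derivatives vanish at $\partial\Omega$. First I would observe that, since $\tilde f$ is continuous, $f(x)\to 0$ as $x\to\partial\Omega$, and by Theorem~\ref{deriv.} every partial derivative $\partial_i f$ is strongly subanalytic on $\Omega$; in particular each $\partial_i f$ is locally bounded near $\partial\Omega$ only up to a subanalytic rate, so by the {\L}ojasiewicz inequalities applied to the subanalytic graph of $\widetilde{\partial_i f}$ in $\mathbb{R}^n\times\mathbb{P}_1$, there is an integer $m$ and a constant $C$ with
\begin{equation*}
\vert \partial_i f(x)\vert \le C\, d(x,\partial\Omega)^{-m}\qquad(x\in\Omega,\ 1\le i\le n),
\end{equation*}
after shrinking to a neighbourhood of $\bar\Omega$ (the global bound on the compact set $\bar\Omega$ costs nothing). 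Likewise, since $\tilde f$ is continuous and subanalytic, {\L}ojasiewicz gives an integer $k$ and a constant $C'$ with $\vert f(x)\vert\le C'\,d(x,\partial\Omega)^{k}$ on $\Omega$, and of course $\vert f\vert$ is globally bounded on $\bar\Omega$.

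Next I would choose $N_1$ large enough that the product rule kills the singularity of the derivatives: on $\Omega$ one has $\partial_i(\tilde f^{N_1}) = N_1\, \tilde f^{N_1-1}\,\partial_i f$, whose modulus is bounded by $N_1 (C')^{N_1-1} C\, d(x,\partial\Omega)^{k(N_1-1)-m}$. Picking $N_1$ with $k(N_1-1)\ge m+1$ forces this to tend to $0$ as $x\to\partial\Omega$. On $\mathbb{R}^n\setminus\bar\Omega$ the function $\tilde f^{N_1}$ is identically $0$, hence $\mathscr{C}^\infty$ there. So the candidate derivative of $\tilde f^{N_1}$ on all of $\mathbb{R}^n$ is the function equal to $N_1 \tilde f^{N_1-1}\partial_i f$ on $\Omega$ and $0$ elsewhere; it is continuous on $\mathbb{R}^n$ by the estimate above, and a standard mean value / fundamental theorem of calculus argument along segments crossing $\partial\Omega$ shows $\tilde f^{N_1}$ is genuinely differentiable there with this derivative. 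Hence $\tilde f^{N_1}\in\mathscr{C}^1(\mathbb{R}^n)$.

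Finally, subanalyticity: $\tilde f^{N_1}$ is continuous on $\mathbb{R}^n$, and its graph is the image of the graph of $\tilde f$ (a subanalytic subset of $\mathbb{R}^n\times\mathbb{P}_1$, which over $\bar\Omega$ is actually contained in $\mathbb{R}^n\times\mathbb{R}$ by continuity) under the subanalytic map $(x,t)\mapsto(x,t^{N_1})$, restricted suitably; a proper, hence finite, composition of subanalytic maps keeps the graph subanalytic, so $\tilde f^{N_1}$ is subanalytic. The main obstacle I expect is the bookkeeping at $\partial\Omega$: one must be careful that "strongly subanalytic'' for the derivatives really does deliver a genuine polynomial $d(x,\partial\Omega)^{-m}$ bound (this is exactly why the stronger notion in Definition~\ref{strongly sub.} was introduced, controlling the graph near points where $\partial_i f$ blows up), and that the resulting continuous candidate derivative is the actual derivative of $\tilde f^{N_1}$ across the boundary rather than merely a one-sided limit — both points are handled by {\L}ojasiewicz inequalities plus an elementary limiting argument, but they are where the real content sits.
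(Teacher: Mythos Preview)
Your proposal is correct and follows exactly the route the paper indicates: the paper itself gives no detailed proof but only the one-line hint ``using {\L}ojasiewicz inequalities we easily obtain the following corollary,'' and your expansion---bounding $|\partial_i f|\le C\,d(x,\partial\Omega)^{-m}$ via strong subanalyticity of the derivatives (Theorem~\ref{deriv.}), bounding $|f|\le C'\,d(x,\partial\Omega)^{k}$ via continuity of $\tilde f$ and {\L}ojasiewicz, and then choosing $N_1$ large---is precisely the intended argument. One small quibble: the {\L}ojasiewicz exponent $k$ need not be an integer (it is generally a positive rational of the form $1/N$), but this changes nothing in the choice of $N_1$.
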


\noindent Now applying again the ideas of the previous  corollary  we finally obtain:

\begin{corollary}\label{C2}
In the situation of the previous corollary there exists an integer \ $N_2$ \ such that \ $\tilde{f}^{N_2}$ \ is \ $\mathscr{C}^2$ \ on \ $\mathbb{R}^n$ \ and subanalytic.
\end{corollary}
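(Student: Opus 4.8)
The plan is to imitate the argument used to pass from Theorem~\ref{deriv.} to Corollary~\ref{C1}, but with an exponent large enough to kill the second derivatives as well. Recall that Corollary~\ref{C1} produced \ $N_1$ \ with \ $g := \tilde f^{N_1}$ \ of class \ $\mathscr{C}^1$ \ and subanalytic on \ $\R^n$; in particular \ $g$ \ is strongly subanalytic on \ $\Omega$ \ (it vanishes on a neighbourhood of \ $\partial\Omega$ \ since it is continuous with value \ $0$ \ there, hence its extension by \ $0$ \ has subanalytic, even continuous, graph). First I would apply Theorem~\ref{deriv.} to \ $g$: each first partial derivative \ $\partial_j g$ \ is again strongly subanalytic on \ $\Omega$, and it is continuous on \ $\R^n$ \ (as \ $g \in \mathscr{C}^1(\R^n)$), hence \ $\partial_j g$ \ tends to \ $0$ \ at \ $\partial\Omega$, so its extension by zero is continuous. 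By the {\L}ojasiewicz inequality applied to the continuous strongly subanalytic function \ $\partial_j g$ \ vanishing on \ $\partial\Omega$, there is an integer \ $m$ \ (uniform in \ $j$ \ after taking the max over the finitely many coordinates) and a constant \ $C$ \ with \ $|\partial_j g(x)| \le C\, d(x,\partial\Omega)^{1/m}$ \ on \ $\Omega$.

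Next I would estimate the second derivatives of \ $\tilde f^{N_2}$ \ for \ $N_2$ \ a large multiple of \ $N_1$, say \ $N_2 = k N_1$. Writing \ $\tilde f^{N_2} = g^{k}$ \ on \ $\Omega$, one has \ $\partial_i\partial_j (g^{k}) = k g^{k-1}\partial_i\partial_j g + k(k-1) g^{k-2}\partial_i g\,\partial_j g$. The term \ $g^{k-2}\partial_i g\,\partial_j g$ \ is controlled by \ $C' d(x,\partial\Omega)^{(k-2)N_1 \lambda + 2/m}$ \ for the {\L}ojasiewicz exponent \ $\lambda>0$ \ of \ $\tilde f$ \ near \ $\partial\Omega$, so it extends continuously by \ $0$ \ once \ $k$ \ is large, and similarly its graph is subanalytic. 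For the first term the difficulty is that \ $\partial_i\partial_j g$ \ need not be bounded. But here I would use Theorem~\ref{deriv.} a second time: each \ $\partial_i\partial_j g$ \ is strongly subanalytic on \ $\Omega$ \ (apply the theorem to the \ $\mathscr{C}^1$ \ strongly subanalytic function \ $\partial_j g$), hence by {\L}ojasiewicz its blow-up at \ $\partial\Omega$ \ is at most polynomial, \ $|\partial_i\partial_j g(x)| \le C'' d(x,\partial\Omega)^{-s}$ \ for some \ $s>0$; multiplying by \ $g^{k-1}$, which is \ $O(d(x,\partial\Omega)^{(k-1)N_1\lambda})$, gives a factor \ $d(x,\partial\Omega)^{(k-1)N_1\lambda - s}$ \ that tends to \ $0$ \ for \ $k$ \ large. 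Choosing \ $k$ \ large enough to make all these exponents positive, every second partial derivative of \ $g^k$ \ extends by \ $0$ \ to a continuous, and in fact (by the same strong-subanalyticity bookkeeping) subanalytic, function on \ $\R^n$. Hence \ $\tilde f^{N_2} = \tilde f^{kN_1}$ \ is \ $\mathscr{C}^2$ \ on \ $\R^n$, and it is subanalytic because it is continuous with subanalytic graph (being a power of \ $\tilde f$, whose graph is subanalytic in \ $\R^n\times\mathbb{P}_1$, and which is now bounded).

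The main obstacle, and the reason Corollary~\ref{C1} does not immediately give the \ $\mathscr{C}^2$ \ statement, is precisely the unboundedness of \ $\partial_i\partial_j g$: one cannot simply say ``bounded derivative times a small power of the distance.'' The point of the proof is that strong subanalyticity is stable under differentiation (Theorem~\ref{deriv.}), so even the second derivatives, though possibly unbounded, blow up only at a controlled polynomial rate, and a sufficiently high power of \ $\tilde f$ \ overwhelms that rate. Once the uniform {\L}ojasiewicz exponents are in hand the choice of \ $N_2$ \ is a finite bookkeeping of the competing powers of \ $d(x,\partial\Omega)$; I would not belabour the arithmetic beyond exhibiting one admissible value of \ $k$.
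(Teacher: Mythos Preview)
Your argument is correct and is precisely the iteration the paper has in mind when it writes ``applying again the ideas of the previous corollary'' --- the paper offers no further proof beyond that phrase. One caveat (which is a looseness in the paper's statement rather than in your reasoning): invoking Theorem~\ref{deriv.} for $\partial_j g$ requires $\partial_j g\in\mathscr{C}^1(\Omega)$, hence $f\in\mathscr{C}^2(\Omega)$, an assumption implicit throughout (and satisfied in the intended application, where $f$ is $\mathscr{C}^\infty$ on $\Omega$) but not literally recorded in the chain of hypotheses.
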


\noindent \begin{remark}As the reader can see in view of the preceding results, the remaining and non trivial step to prove the existence of a subanalytic \ $\mathscr{C}^2$ \ function which vanishes exactly on \ $\mathbb{R}^n \setminus \Omega$ \ as stated in Theorem \ref{L:1}, is to show the existence of a \ $\mathscr{C}^2$ \ strictly positive  (strongly) subanalytic function \ $f$ \ on \ $\Omega$ \ which vanishes at the boundary. The natural candidate is, of course, the function \ $x \mapsto d(x, \partial \Omega)$. But all conditions are satisfied excepted smoothness. And the non smoothness points may go to the boundary.
If one tries to use the "desingularization theorem" of H. Hironaka to solve this problem, a new difficulty comes  then because the jacobian of the modification may vanish inside \ $\Omega$ \ and not only on some points in \ $\partial \Omega$.
\end{remark}

\subsection{Example}
Let us consider the analytic map
\ $ F : \mathbb{R}^3 \to \mathbb{R}^3 $ \ 
defined by
$$ F(x,y,z) = \big( y.(e^x - 1) + x^2 + y^2 + z^2 - \varepsilon^2, y.(e^{x.\sqrt{2}} - 1), y.(e^{x.\sqrt{3}}-1)\big).$$
Denote  \ $\Omega$ \ the interior of the image \ $\tilde{\Omega}$ \ of the compact ball \ $\bar B_3(0,\varepsilon)$. Let us start by showing that the image by \ $F$ \ of the sphere  \ $S_{\varepsilon}$ (the boundary  of  \ $\bar B(0,\varepsilon)$), \ is a subanalytic compact subset of \ $\mathbb{R}^3$ \ which is not semi-analytic in the neighborhood of \ $(0,0,0)$. This example is extracted from \cite{HH2}( example I.6).

\begin{lemma}\label{sous-ana.}
The compact \ $F(S_{\varepsilon})$ \ is not semi-analytic in the neighbourhood of the origin.
\end{lemma}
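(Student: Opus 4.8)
The plan is to argue by contradiction: assuming $F(S_{\varepsilon})$ semi-analytic in a neighbourhood of $0$, I would produce a non-trivial polynomial relation between the functions $e^{x}$, $e^{x\sqrt 2}$, $e^{x\sqrt 3}$, which is impossible since $1,\sqrt 2,\sqrt 3$ are linearly independent over $\Q$. The first step is to simplify $F(S_{\varepsilon})$: on $S_{\varepsilon}$ one has $x^{2}+y^{2}+z^{2}-\varepsilon^{2}=0$, so the first component of $F$ becomes $y(e^{x}-1)$ and the variable $z$ disappears; since the image of $S_{\varepsilon}$ under the projection onto the $(x,y)$-plane is the closed disc $\bar D$ of radius $\varepsilon$, we get $F(S_{\varepsilon})=G(\bar D)$ with $G(x,y):=(y(e^{x}-1),y(e^{x\sqrt 2}-1),y(e^{x\sqrt 3}-1))$. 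In particular $F(S_{\varepsilon})$ is compact, subanalytic, of dimension $\le 2$ (image of a $2$-dimensional compact set under the $\mathscr{C}^{1}$, hence locally Lipschitz, map $G$), it contains $0=G(0,0)$, and it contains the $2$-parameter piece $\{G(x,y)\ ;\ |x|<a,\ 0<y<\delta\}$, which accumulates at $0$.

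Next I would turn semi-analyticity into the existence of a defining analytic function. Suppose $F(S_{\varepsilon})$ is semi-analytic on a connected ball $U$ around $0$, and write $F(S_{\varepsilon})\cap U=\bigcup_{i}\{f_{i}=0,\ g_{i,1}>0,\dots,g_{i,s_{i}}>0\}$ with $f_{i},g_{i,j}$ real analytic on $U$ and all strata non-empty. Since $\dim_{0}F(S_{\varepsilon})\le 2<3$, the set $F(S_{\varepsilon})$ has empty interior near $0$, hence no $f_{i}$ can be identically $0$ on $U$ (otherwise the corresponding stratum would be a non-empty open subset of $F(S_{\varepsilon})$). Therefore $g:=\prod_{i}f_{i}$ is real analytic on $U$, not identically zero, and vanishes on $F(S_{\varepsilon})\cap U$; in particular $g\circ G$ vanishes on the open set $\{|x|<a,\ 0<y<\delta\}$, and, being real analytic near $(0,0)$, it vanishes identically there by the principle of analytic continuation:
\[
g\bigl(y(e^{x}-1),\,y(e^{x\sqrt 2}-1),\,y(e^{x\sqrt 3}-1)\bigr)=0\qquad\text{for }(x,y)\text{ near }(0,0).
\]

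Then I would exploit the homogeneity in $y$. Expanding $g$ as the sum $g=\sum_{m\ge 0}g_{m}$ of its homogeneous components at $0$ and using $g_{m}(yu,yv,yw)=y^{m}g_{m}(u,v,w)$, the identity above becomes $\sum_{m}y^{m}\,g_{m}(e^{x}-1,e^{x\sqrt 2}-1,e^{x\sqrt 3}-1)=0$; fixing $x$ and letting $y$ vary forces $g_{m}(e^{x}-1,e^{x\sqrt 2}-1,e^{x\sqrt 3}-1)=0$ for every $m$ and every small $x$. Picking $m$ with $g_{m}\neq 0$ and setting $R(a,b,c):=g_{m}(a-1,b-1,c-1)$, still a non-zero polynomial, we obtain $R(e^{x},e^{x\sqrt 2},e^{x\sqrt 3})\equiv 0$ near $x=0$. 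But $e^{x},e^{x\sqrt 2},e^{x\sqrt 3}$ are algebraically independent over $\C$: a monomial $e^{ix}e^{jx\sqrt 2}e^{kx\sqrt 3}$ equals $e^{(i+j\sqrt 2+k\sqrt 3)x}$, the exponents $i+j\sqrt 2+k\sqrt 3$ are pairwise distinct for $(i,j,k)\in\N^{3}$ because $1,\sqrt 2,\sqrt 3$ are $\Q$-linearly independent, and distinct exponential functions are linearly independent over $\C$. This contradiction would prove the lemma.

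I expect the decisive step to be the passage from semi-analyticity to the existence of a single non-zero real analytic function vanishing on $F(S_{\varepsilon})$ near $0$: this is exactly where semi-analyticity, rather than mere subanalyticity, is crucial — the whole point of the example being that $F(S_{\varepsilon})$, although subanalytic, lies in no proper analytic subset near $0$. The remaining computation is short; its one conceptual subtlety is that the transcendence of this example shows up only in the radial (scaling) direction, so one has to work with a full neighbourhood of $0$ and the homogeneity expansion, not with a single analytic slice of $F(S_{\varepsilon})$.
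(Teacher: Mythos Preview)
Your proof is correct and follows essentially the same route as the paper's: both argue that the set has empty interior, extract from semi-analyticity a non-zero real analytic germ $g$ vanishing on $F(S_\varepsilon)$ near $0$, expand $g$ in homogeneous components and use the $y$-scaling to isolate a single homogeneous polynomial $P_{m_0}$ with $P_{m_0}(e^{x}-1,e^{x\sqrt2}-1,e^{x\sqrt3}-1)\equiv 0$, and finish by the algebraic independence of $e^{x},e^{x\sqrt2},e^{x\sqrt3}$. Your write-up is in fact more explicit than the paper's on two points --- the reduction $F(S_\varepsilon)=G(\bar D)$ and the algebraic independence via $\Q$-linear independence of $1,\sqrt2,\sqrt3$ --- which the paper only alludes to (``behaviour at infinity'').
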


\begin{proof} Since this compact set has an empty interior, if it is semi-analytic in a neighbourhood of the origin, there shall exist an analytic function \ $f : U \to \mathbb{R}$ \ on a ball \ $U$ \ centered in \ $0$, non identically zero, such that \ $f^{-1}(0)$ \ contains \ $U \cap F(S_{\varepsilon})$. Let 
$$ f = \sum_{m \geq m_0} \ P_m $$
be the Taylor series of \ $f$ at the origin, which we may assume to be convergent in \ $U$ \  provided that  \ $U$ is small enough. We shall assume that the homogeneous polynomial $P_{m_0}$ \ is not identically zero. Hence, considering \ $(x,y,z) \in S_{\varepsilon}$ \ close enough to \ $(0,0,\varepsilon)$, the definition of \ $F$ \ entails  the equality
$$ 0 \equiv  \sum_{m \geq m_0} \ y^m.P_m((e^x - 1),(e^{x.\sqrt{2}} - 1),(e^{x.\sqrt{3}} - 1)) $$
which holds  for \ $(x,y) \in \mathbb{R}^2$ \ close enough to \ $(0,0)$. We conclude that \ $P_{m_0}((e^x - 1),(e^{x.\sqrt{2}} - 1),(e^{x.\sqrt{3}} - 1)) $ \ is identically zero for \ $x$ \ in a neighbourhood of \ $0$. Hence this analytic function vanishes identically on \ $\mathbb{R}$. The behaviour at infinity of this function easily entails\footnote{ This is equivalent to prove the algebraic independency  of the functions \\ $(e^x - 1),(e^{x.\sqrt{2}} - 1),(e^{x.\sqrt{3}} - 1)$.} that we must have \ $P_{m_0} \equiv 0$, which gives a contradiction. 
\end{proof}
We shall now describe the open set \ $\Omega$. Let us remark that the jacobian of  \ $F$ \ is given by
$$ J(F)(x,y,z) = 2yz.\big((\sqrt{2} -\sqrt{3}).e^{x.(\sqrt{2}+\sqrt{3})} - \sqrt{2}.e^{x.\sqrt{2}} + \sqrt{3}.e^{x.\sqrt{3}}\big) $$
and for \ $\varepsilon$ \ small enough, it doesn't vanish on \ $\{ x.y.z = 0 \}$ \ within the ball  \ $\bar B_3(0, \varepsilon)$.  Indeed, the brackets give an analytic function of a single variable \ $x$; hence it has an isolated  zero in \ $x = 0$. The image of \ $\{ x.y = 0 \} \cap \bar B_3(0,\varepsilon)$ \ by \ $F$ \ is  \ $[-\varepsilon^2,0]\times \{(0,0)\}$ \ which is contained in \footnote{  See the description of \ $\Gamma$ near \ $(0,0)$ given below}  the boundary of \ $\tilde{\Omega}$.\\
The image of \ $\{z = 0\}$ \ is more complicated to describe.\\

Let us now consider the analytic morphism \ $G : \mathbb{R}^2 \to \mathbb{R}^2 $ \ defined by $$ G(x,y) : = \big( y.(e^{x.\sqrt{2}} - 1), y.(e^{x.\sqrt{3}}-1)\big).$$
Let us denote by  \ $\Gamma$ \ the image by \ $G$ \ of the ball \ $\bar B_2(0,\varepsilon)$ \ of \ $\mathbb{R}^2$. 
If \ $(v,w) \in \Gamma \setminus \{(0,0)\}$ \ then the fiber \ $G^{-1}(v,w)$ \ is reduced to a single point (for \ $\varepsilon$ \ small enough). In fact we must have \ $v.w \not= 0$ \ and
 $$ \frac{(e^{x.\sqrt{2}} - 1)}{(e^{x.\sqrt{3}} - 1)} = \frac{v}{w} = \frac{\sqrt{2}}{\sqrt{3}}.h(x) $$
 whenever \ $h \in \C\{x\}$ \ converges for \ $\vert x\vert < 2\pi/\sqrt{3}$ \ et verifies \ $h(0) = 1$ \ and \ $ h'(0) = (\sqrt{2} - \sqrt{3})/2$;  these equations determine a unique  \ $x \in [-\varepsilon,\varepsilon]$, for \ $\varepsilon \ll1 $, and hence a unique \ $y$. Remark that for  \ $x$ \ in a neighbourhood of \ $0$, we have \ $v/w$ \ close  to \ $\sqrt{2}/\sqrt{3}$. Therefore \ $\Gamma$ \ doesn't approach \ $(0,0)$ \ other than along that direction.\\
The fiber in \ $(0,0)$ \ of \ $G$ \ is the curve  \ $\{ x.y = 0 \} \cap \bar B_2(0,\varepsilon)$.\\
 
 Remark that the points in the sphere \ $ \{ x^2 + y^2 = \varepsilon^2 \}$ \ are mapped  on the boundary of \ $\Gamma$. Indeed, for those who lie on \ $\{ x.y = 0 \}$ \ their image is the origin. Otherwise, for each of such points not mapped on the origin, the jacobian of \ $G$ would \ vanish and the boundary of \ $\bar B_2(0,\varepsilon)$ \ would be mapped on the boundary of \ $\Gamma$ \ in its neighbourhood.\\
  Hence, any point of the interior \ $\Gamma'$ \ of \ $\Gamma$ \ is the image by \ $G$ \ of some point in \ $B_2(0,\varepsilon) \setminus \{x.y = 0 \}$.\\

  We shall denote by \ $\varphi : \Gamma \setminus \{(0,0)\} \to \mathbb{R} $ \ the subanalytic function \footnote{ The graph of \ $G^{-1} : \Gamma\setminus \{(0,0)\} \to \bar B_2(0,\varepsilon \setminus \{ x.y = 0\}$ \ is the same as that  the graph of \ $G :  \bar B_2(0,\varepsilon) \setminus \{ x.y = 0\} \to \Gamma \setminus \{(0,0)\} $.} 
 given by \ $\varphi(v,w) = \vert\vert G^{-1}(v,w)\vert\vert^2$, in other words, the composition of \ $G^{-1}$ \ with the square of the euclidean norm in \ $\mathbb{R}^2$.\\
 We shall denote by \ $\psi : \Gamma \setminus \{(0,0)\} \to \mathbb{R} $ \ the subanalytic function defined by setting \ $\psi(v,w) = y.(e^x -1)$ \  where \ $G^{-1}(v,w) = (x,y)$, and we set
 
  \begin{align*}
 & \Delta^+ : = \big\{(\psi(v,w), v, w), \quad {\rm for} \quad (v,w) \in \Gamma \setminus \{(0,0)\}\big\} \\
 & \Delta^- : = \big\{ (\psi(v,w) + \varphi(v,w) -\varepsilon^2, v, w), \quad {\rm for} \quad (v,w) \in \Gamma \setminus \{(0,0)\}\big\} \\
 & \Delta^0 : = [-\varepsilon^2,0]\times \{(0,0)\}\  
 \end{align*}
 
 Note that  $$\Delta^+ \cap \Delta^- = \big\{ (u,v,w) \in \mathbb{R} \times (\Gamma \setminus \{(0,0)\}) \ / \ u = \psi(v,w) \quad{\rm and} \quad \varphi(v,w) = \varepsilon^2 \big\}$$
 is  the graph of the restriction of \ $\psi$ \  to \ $\partial\Gamma \setminus \{(0,0)\}$.\\
 
 We have now the following description of  \ $\tilde{\Omega}$ \ and of its interior \ $\Omega$.
 
 \begin{lemma}\label{L:24}
 One has \ $ \partial\tilde{\Omega}  = \Delta^+ \cup \Delta^- \cup \Delta^0$.
 The interior \ $\Omega$ \ is the open set
 $$ \Omega = \big\{ (u,v,w) \in \mathbb{R}\times \Gamma' \ / \   \psi(v,w) + \varphi(v,w) -\varepsilon^2  <  u  < \psi(v,w) \big\} $$
 where\ $\Gamma'$ \ denotes the interior of  \ $\Gamma$.
 \end{lemma}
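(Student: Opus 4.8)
The plan is to analyze the map $F$ by using its obvious product-like structure over the map $G$. Write $F(x,y,z) = \big(\psi(G(x,y)) + \varphi_z, G(x,y)\big)$, where I set $\varphi_z := x^2+y^2+z^2-\varepsilon^2$ and, on the locus where $G$ is invertible, $\psi(v,w) = y(e^x-1)$ with $(x,y)=G^{-1}(v,w)$; note that $x^2+y^2 = \varphi(v,w)$ there. So the first coordinate $u$ of $F(x,y,z)$ equals $\psi(v,w) + \varphi(v,w) - \varepsilon^2 + z^2$ with $(v,w) = G(x,y)$. First I would treat the generic stratum: for a point $(x,y) \in \bar B_2(0,\varepsilon)$ with $x.y \neq 0$, the fibre $G^{-1}(v,w)$ is a single point (already established in the text), so as $z$ ranges over the interval with $z^2 \le \varepsilon^2 - x^2 - y^2 = \varepsilon^2 - \varphi(v,w)$, the first coordinate $u$ sweeps exactly the closed interval $[\psi(v,w) + \varphi(v,w) - \varepsilon^2,\ \psi(v,w)]$. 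Running $(v,w)$ over $\Gamma'$ (every point of which, by the text, is hit from $B_2(0,\varepsilon)\setminus\{xy=0\}$) and over $\partial\Gamma\setminus\{(0,0)\}$ (where $\varphi = \varepsilon^2$, so the interval degenerates to the single point $u = \psi(v,w)$, i.e. $\Delta^+\cap\Delta^-$), this shows that the part of $\tilde\Omega$ lying over $\Gamma\setminus\{(0,0)\}$ is precisely $\{(u,v,w): \psi+\varphi-\varepsilon^2 \le u \le \psi\}$, with the open slab being the image of interior points. That gives the stated formula for $\Omega$ over $\Gamma'$, and identifies the two ``horizontal'' boundary pieces as $\Delta^+$ (the face $u=\psi$, attained when $z=0$) and $\Delta^-$ (the face $u = \psi+\varphi-\varepsilon^2$, attained when $x^2+y^2+z^2 = \varepsilon^2$).

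Next I would handle the exceptional fibre. The set $\{xy=0\}\cap\bar B_3(0,\varepsilon)$ is exactly $\{xy = 0\}\cap\bar B_2(0,\varepsilon)$ thickened in $z$; on it $G \equiv (0,0)$ and $y(e^x-1) \equiv 0$ as well (either $y=0$ or $x=0$), so $F(x,0,z) = (x^2+z^2-\varepsilon^2, 0,0)$ and $F(0,y,z) = (y^2+z^2-\varepsilon^2,0,0)$, both of which trace out exactly $[-\varepsilon^2,0]\times\{(0,0)\} = \Delta^0$ as $(x,z)$ (resp. $(y,z)$) ranges over the relevant disc. So the part of $\tilde\Omega$ over $(0,0)\in\C$ is $\Delta^0$, a segment with empty interior in $\R^3$; in particular it contributes nothing to $\Omega$ and lies in $\partial\tilde\Omega$. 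Combining the two cases gives $\tilde\Omega = \{\psi+\varphi-\varepsilon^2 \le u \le \psi\ \text{over}\ \Gamma\setminus\{(0,0)\}\} \cup \Delta^0$, and the boundary decomposition $\partial\tilde\Omega = \Delta^+\cup\Delta^-\cup\Delta^0$ follows once we check that no interior point of $\tilde\Omega$ escapes the open slab and that every point of the three pieces is genuinely a boundary point.

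The last point is where I expect the real work to be, and it has two halves. First, one must verify $\Delta^+, \Delta^-, \Delta^0 \subset \partial\tilde\Omega$ rather than merely $\subset\tilde\Omega$: for $\Delta^+$ and $\Delta^-$ this is because they are the images of points of $S_\varepsilon$ (Lemma \ref{sous-ana.}'s setup: $z=0$ with $x^2+y^2=\varepsilon^2$ for one face, $x^2+y^2+z^2=\varepsilon^2$ for the other), hence limits of points outside $\tilde\Omega$; for $\Delta^0$ it follows from the behaviour of $\Gamma$ near $(0,0)$ — since $\Gamma$ approaches $(0,0)$ only along the direction $v/w \to \sqrt2/\sqrt3$, points $(u,v,w)$ with $(v,w)$ tending to $(0,0)$ transversally to that direction are not in $\tilde\Omega$, so $\Delta^0$ is accessible from the complement. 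Second, and conversely, one must confirm that the open slab over $\Gamma'$ really is the \emph{topological} interior, i.e. that its points have no neighbourhood meeting $\Delta^0$ — this again uses that $\Gamma$ is ``thin'' near the origin (it does not contain a full neighbourhood punctured by $(0,0)$), so the segment $\Delta^0$ does not border the slab from the side. Here the subanalyticity of $\varphi$ and $\psi$ and the {\L}ojasiewicz-type control of $\Gamma$ near $(0,0)$ are the essential inputs; modulo that, the identification of $\Omega$ with the displayed open set is a direct consequence of the fibrewise analysis above.
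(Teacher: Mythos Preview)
Your overall strategy---fibre the map $F$ over $G$, write $u = \psi(v,w) + \varphi(v,w) - \varepsilon^2 + z^2$, and read off the slab description---is exactly the paper's approach, and the treatment of the exceptional fibre $\{xy=0\}\mapsto\Delta^0$ is correct. But there are two concrete errors in the execution.

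First, you have the identification of $\Delta^+$ and $\Delta^-$ with their preimages reversed. From $u = \psi + \varphi - \varepsilon^2 + z^2$ one sees that $z=0$ gives the \emph{lower} face $u = \psi + \varphi - \varepsilon^2$, which is $\Delta^-$ by definition, while the sphere $x^2+y^2+z^2=\varepsilon^2$ (so $z^2 = \varepsilon^2 - \varphi$) gives the \emph{upper} face $u = \psi$, which is $\Delta^+$. Your parenthetical ``$\Delta^+$ (the face $u=\psi$, attained when $z=0$)'' is internally inconsistent, and the later description ``$z=0$ with $x^2+y^2=\varepsilon^2$ for one face'' misidentifies the preimage of $\Delta^-$: it is the full equatorial disc $\{z=0\}\cap\bar B_3(0,\varepsilon)$ (minus $\{xy=0\}$), not just the equatorial circle on $S_\varepsilon$.

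Second, and more substantively, your argument that $\Delta^+,\Delta^-\subset\partial\tilde\Omega$ because ``they are the images of points of $S_\varepsilon$, hence limits of points outside $\tilde\Omega$'' does not work. The preimage of $\Delta^-$ is not on $S_\varepsilon$, and in any case knowing that a point is $F$ of a boundary point of the ball does not by itself show it is approached from $\mathbb{R}^3\setminus\tilde\Omega$ (you would need injectivity or degree information you have not established). The correct and simpler argument, which the paper uses, is purely in the target: once you know $\tilde\Omega\setminus\Delta^0$ is exactly the closed slab $\{\psi+\varphi-\varepsilon^2\le u\le\psi\}$ over $\Gamma\setminus\{(0,0)\}$, any point with $u=\psi(v,w)$ is approached by $(u+\delta,v,w)\notin\tilde\Omega$, and symmetrically for the lower face. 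Your final worry about $\Delta^0$ ``bordering the slab from the side'' is then unnecessary: a point of the open slab over $\Gamma'$ has a neighbourhood in $\tilde\Omega$ by continuity of $\varphi,\psi$ on $\Gamma'$, regardless of what happens over $(0,0)$.
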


\begin{proof} Let\ $(u,v,w) \in \tilde{\Omega}$. If \ $v.w= 0$ \ then \ $x.y = 0$ \ and \ $v = w = 0$, and \ $u = x^2 + y^2 + z^2 - \varepsilon^2 $ \ belongs to  \ $[-\varepsilon^2,0]$ which is contained in \ $\Delta^0$. Since the projection of \ $\Omega$ \ on \ $\mathbb{R}^2$ \ is an open set contained in \ $\Gamma$, hence in  \ $\Gamma'$, the point \ $(u,v,w)$ does not belong to \ $\Omega$. Let us now exclude this case.\\

  We have a point \ $(x,y,z) \in \bar B_3(0, \varepsilon)$ \ such that \ $F(x,y,z) = (u,v,w)$, with \ $x.y \not= 0$. Then \ $(x,y) \in \bar B_2(0, \varepsilon) \setminus \{x.y = 0\}$ \ and \ $G(x,y) = (v,w) $ \ is not\ $(0,0)$. Since \ $\varphi(v,w) = x^2 + y^2$ we have
  $$ u = \psi(v,w) + \varphi(v,w) + z^2 - \varepsilon^2 $$
  where \ $z \in [-\varepsilon,\varepsilon]$ \ is, up to a sign, determined by this equation. We conclude that the inequalities
  \begin{equation}\label{E:10}
   \psi(v,w) + \varphi(v,w) -\varepsilon^2  \leq u \leq \psi(v,w) 
   \end{equation}
  hold  on \ $\tilde{\Omega}$. 
We have to check  that   \ $\partial\tilde{\Omega}\setminus\Delta^0$ \ is exactly described by the equality
   \begin{equation}\label{E:11}
   (u - \psi(v,w) - \varphi(v,w) + \varepsilon^2)(\psi(v,w) - u) = 0 . 
   \end{equation}
   Since the projection on \ $\mathbb{R}^2$ \ is open,  \ if\ $(v,w) \not\in \Gamma'$ then it must lie in the boundary of  \ $\Omega$. It suffices to prove that for \ $(v,w) \in \Gamma'$ \ the equality above implies that \ $(v,w)$\ is in the boundary. This is clear because near any \ $(u,v,w)$ \  of \ $\Omega$ \ one can find \ $\delta >0$ \ such that \ $]u-\delta,u+\delta[\times (v,w)$ \ is contained in \ $\Omega$, which is not possible by the inequalities (\ref{E:10}) in a point where the equality (\ref{E:11}) is satisfied.\\
   Hence it is sufficient to prove that \ $\tilde{\Omega} \setminus \Delta^0$ \ is the set of points \ 
   $(u,v,w) $ \ in \ $ \mathbb{R}\times (\Gamma\setminus\{(0,0)\})$ \ satisfying the inequalities (\ref{E:10}). Indeed, any  choice of \ $(v,w) \in \Gamma\setminus\{(0,0)\}$ \ gives a unique point  \ $(x,y) \in B_2(0,\varepsilon)$ \ such that \ $G(x,y) = (v,w)$ \ and the inequalities (\ref{E:10}) \ entail that we can find at least a \ $z \in \mathbb{R}$ \ such that \ $z^2 = u - \psi(v,w) - \varphi(v,w) + \varepsilon^2$ \ and that \ $\varphi(v,w) + z^2 \leq \varepsilon^2$.   \\
   Note that if \ $u = \psi(v,w) + \varphi(v.w) - \varepsilon^2$ \ we will have \ $z = 0$. Therefore, the boundary \ $\Delta^-$ \ corresponds to the image of \ $\bar B_3(0,\varepsilon) \cap \{z = 0\}\setminus\Delta^0$.
   Similarly the equality \ $u = \psi(v,w)$ \ corresponds to the image of the sphere \ $\{x^2 + y^2 + z^2 = \varepsilon^2\}$ \ deprived of\ $\Delta^0$ . \hfill    \end{proof}
  \bigskip
  
  Let us now consider the function \ $f : \mathbb{R}^3 \to \mathbb{R}^+ $ \ defined as follows:
  \begin{itemize}
  \item For \ $(u,v,w) \in \Omega$ \ one sets \\ 
  $f(u,v,w) =  ( \psi(v,w) - u)(u - \psi(v,w) - \varphi(v,w) + \varepsilon^2) $
  \item For \ $(u,v,w) \not\in \Omega$ \ one sets \ $f(u,v,w) = 0 $.
  \end{itemize}
  Note that  $f$ \ est strictly positive on \ $\Omega$ \ by Lemma \ref{L:24}, and that it is  analytic on the complement of \ $\partial \Omega$, since the functions \ $\varphi$ \ and \ $\psi$ \ are analytic  on \ $\Gamma'$. Moreover \ $f$ \ is bounded.
    
    Let us now define \ $\tilde{f}(u,v,w) = f(u,v,w).v^2.w^2 $. 
    
  \begin{lemma}\label{L:25}
  The function \ $\tilde{f} : \mathbb{R}^3 \to \mathbb{R}^+ $ \ is  subanalytic and continuous,  it satisfies
  $$ \Omega  = \{ (u,v,w) \in \mathbb{R}^3 \ / \  \tilde{f}(u,v,w) > 0 \} $$
  and it is  \ $\mathscr{C}^{\infty}$ \ on \ $\mathbb{R}^3 \setminus \partial \Omega$.
  \end{lemma}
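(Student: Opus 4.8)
The plan is to verify the three assertions — subanalyticity together with continuity, the characterization of $\Omega$ as the positivity set, and the $\mathscr{C}^\infty$ regularity off $\partial\Omega$ — essentially in that order, reducing each to properties of $\varphi,\psi$ and of $f$ that have already been established. For the positivity set: on $\Omega$ we have $v.w\neq 0$ (since the projection of $\Omega$ lands in $\Gamma'$, which does not meet the axes near the origin, and $v.w\neq0$ on the rest of $\Gamma'$ for $\varepsilon$ small), and $f>0$ by Lemma \ref{L:24}, so $\tilde f=f.v^2.w^2>0$ there; conversely, off $\Omega$ one has $f=0$ hence $\tilde f=0$. This gives $\Omega=\{\tilde f>0\}$ immediately. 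The $\mathscr{C}^\infty$ claim off $\partial\Omega$ is also quick: on the open set $\mathbb{R}^3\setminus\partial\Omega=\Omega\sqcup(\mathbb{R}^3\setminus\overline\Omega)$, the function $\tilde f$ is either identically $0$ (on the second piece) or a product of $v^2w^2$ with the analytic function $f$ built from the analytic functions $\varphi,\psi$ on $\Gamma'$ (on $\Omega$), so it is $\mathscr{C}^\infty$, indeed analytic, on each piece.

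The real content is the subanalyticity and continuity of $\tilde f$ on all of $\mathbb{R}^3$, and the point of multiplying by $v^2w^2$ is precisely to gain these across $\partial\Omega$. I would argue as follows. First, $\varphi$ and $\psi$ are subanalytic on $\Gamma\setminus\{(0,0)\}$ by construction (their graphs coincide, up to coordinate permutation, with the graph of the proper analytic map $G$ restricted to $\bar B_2(0,\varepsilon)\setminus\{x.y=0\}$), hence $f$ is subanalytic on $\mathbb{R}^3\setminus\Delta^0$ as a polynomial expression in $u,\varphi,\psi$ truncated by $0$; and $\Delta^0$ is a compact semianalytic arc, so $\tilde f=f.v^2w^2$ is subanalytic on $\mathbb{R}^3\setminus\Delta^0$ and we only need to handle a neighbourhood of $\Delta^0$, in particular of the origin. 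For continuity I would bound $|f|$: from the description in Lemma \ref{L:24}, $f$ is a product of two factors each of which is the gap between $u$ and one of the two boundary values, so on $\Omega$ we have $0\le f(u,v,w)\le\big(\psi-u-\varphi+\varepsilon^2\big)\big(\psi-u\big)$ controlled by $\varphi(v,w)^2$; as $(v,w)\to(0,0)$ in $\Gamma'$ the unique preimage $(x,y)$ tends to $(0,0)$ along the axis $\{x.y=0\}$, so $\varphi(v,w)=\|G^{-1}(v,w)\|^2\to0$, whence $f$ stays bounded and $\tilde f=f.v^2w^2\to0$; combined with continuity of $f$ away from $\partial\Omega$ and the squeeze $0\le\tilde f\le C.v^2w^2$ near $\partial\Omega$, this yields continuity of $\tilde f$ everywhere. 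Finally, a continuous function whose graph is subanalytic over the complement of a point (here $\Delta^0$, or just the origin) has subanalytic graph globally — the closure of the graph picks up only the single point $(0,0,0,0)$, and adding a point to a subanalytic set keeps it subanalytic — so $\tilde f$ is subanalytic on $\mathbb{R}^3$.

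The step I expect to be the main obstacle is the continuity of $\tilde f$ at the points of $\Delta^0$, and especially at the origin: one must control simultaneously how fast $\varphi(v,w)$ (hence $f$) can blow up or oscillate as $(v,w)\to(0,0)$ within $\Gamma'$ and how the factor $v^2w^2$ decays, using the fact — extracted in the discussion preceding Lemma \ref{L:24} — that $\Gamma$ approaches the origin only in the direction $v/w\approx\sqrt2/\sqrt3$, so that a {\L}ojasiewicz inequality for the proper map $G$ bounds $\|G^{-1}(v,w)\|$ by a power of $\|(v,w)\|$ and forces $f.v^2w^2\to0$. Once this quantitative boundary estimate is in hand, everything else is routine subanalytic bookkeeping.
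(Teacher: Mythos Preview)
Your treatment of the positivity set and of $\mathscr{C}^\infty$-regularity off $\partial\Omega$ matches the paper's. Your route to subanalyticity, via the subanalyticity of $\varphi$ and $\psi$ and closure of the graph, is a legitimate alternative to the paper's more explicit description of the graph of $f|_\Omega$ as an image under $F\times\mathrm{id}$ of the graph of a polynomial function on $\bar B_3(0,\varepsilon)$; both work, and the paper even remarks in a footnote that your kind of argument (stability properties of subanalytic functions) suffices.

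There is, however, a genuine error in your continuity argument at $\Delta^0$. You assert that as $(v,w)\to(0,0)$ in $\Gamma'$ the preimage $G^{-1}(v,w)=(x,y)$ tends to $(0,0)$, so that $\varphi(v,w)=x^2+y^2\to0$. This is false: the fibre $G^{-1}(0,0)$ is the entire cross $\{x.y=0\}\cap\bar B_2(0,\varepsilon)$, and for instance fixing $x=\varepsilon/2$ and letting $y\to0$ sends $(v,w)\to(0,0)$ while $\varphi(v,w)\to\varepsilon^2/4$. So $\varphi$ is not continuous at the origin, your bound ``controlled by $\varphi(v,w)^2$'' has no force, and no {\L}ojasiewicz inequality of the type you anticipate in your last paragraph can hold for $G^{-1}$. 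The paper's argument at $\Delta^0$ is much simpler than what you are bracing for: one just observes that $f$ is \emph{bounded} on $\Omega$ (it is a product of two nonnegative factors whose sum equals $\varepsilon^2-\varphi\le\varepsilon^2$), and since $v_0=w_0=0$ on $\Delta^0$, the factor $v^2w^2$ by itself forces $\tilde f\to0$. That is precisely why the factor $v^2w^2$ was introduced.

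A secondary gap: you do not actually establish continuity at points of $\Delta^+\cup\Delta^-$ away from the origin, since your squeeze $0\le\tilde f\le C\,v^2w^2$ is useless when $v,w$ are bounded away from zero. At such points $\varphi$ and $\psi$ are continuous, and one of the two factors defining $f$ tends to $0$; this is how the paper handles $\Delta^+$ and $\Delta^-$ separately.
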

  
  \begin{proof} First we prove that \ $f$ \ is subanalytic\footnote{As pointed by the referee, this fact is consequence of basic stability properties of subanalytic functions. We give a direct proof for non specialists.}. Since its graph is the union of the graph  of its restriction to \ $\Omega$ \ and the set \ $(\mathbb{R}^3\setminus \Omega)\times \{0\}$\ which is subanalytic,  $\Omega$ \ being an open subanalytic set  of \ $\mathbb{R}^3$, it is sufficient to prove that the graph of the restriction of \ $f$ \ to \ $\Omega$ \ is subanalytic. \\
 Let us consider the polynomial  morphism\ $h : \mathbb{R}^3 \to \mathbb{R}$ \ given by
  $$ h(x,y,z) = (\varepsilon^2 -(x^2 + y^2 + z^2) ).z^2 $$
  and denote by \ $X, X_1, X_2$ \ the  graph of the restriction of \ $h$ \ respectively to \\
   $\bar B_3(0,\varepsilon), \partial B_3(0,\varepsilon), \bar B_3(0,\varepsilon) \cap \{x.y = 0 \}$ \   and \ $Y, Y_1, Y_2$ \ the respective images  of these graphs by the morphism \ $F \times id : \mathbb{R}^3 \times \mathbb{R} \to \mathbb{R}^3 \times \mathbb{R}$. 
   
  Let us prove that the graph of the restriction of \ $f$ \  to \ $\Omega$ \ is equal to  \ $ Y \setminus (Y_1 \cup Y_2).$
Indeed, for \ $(u,v,w) \in \Omega$, \ if \ $(x,y,z) \in \bar B_3(0,\varepsilon)$ \ verifies \ $F(x,y,z) = (u,v,w)$, we get \ $\varphi(v,w) = x^2 + y^2, \psi(v,w) = y.(e^x - 1)$ and \ $u = \psi(v,w) + \varphi(v,w) + z^2 - \varepsilon^2 $. \\
  \noindent One sees that \ $f(u,v,w) = (\varepsilon^2 - (x^2 + y^2 + z^2)).z^2$. 
  To finish, it is enough to note that the points of \ $F(\bar B_3(0,\varepsilon) \cap \{x.y = 0 \})$ \ and of \ $F(\partial B_3(0,\varepsilon))$ \ are never in \ $\Omega$. Hence \ $\tilde{f}$ \ is subanalytic.\\
   Let us show that it is continuous along \ $\partial \Omega$, since it is \ $\mathscr{C}^{\infty}$ \ on \ $\mathbb{R}^3 \setminus \partial \Omega$. Let \ $(u_0,v_0,w_0) \in \partial \Omega$. First assume that \ $(u_0,v_0,w_0)$ \ belongs to \ $\Delta^+ $. Then  \ $u_0 = \psi(v_0,w_0) $, in other words, we get the image by \ $F$ \ of a  point  \ $(x,y,z) \in \partial B_3(0,\varepsilon) \setminus \{ x.y = 0\}$. Hence the  limit of \ $ (u - \psi(v,w))$ \ when \ $(u,v,w) \in \Omega$ \ tends to \ $(u_0,v_0,w_0)$ \ is zero. As the functions \ $\psi$ \ and \ $\varphi$ \ are bounded on \ $\Omega$, the limit of \ $\tilde{f}$ \ is zero in such a point.\\
  If \ $(u_0,v_0,w_0) \in \Delta^-$, then we have the image of a  point in 
   $$(\bar B_3(0,\varepsilon)\cap \{z = 0\}) \setminus \{x.y = 0\}.$$
   Since the function \ $\psi$ \ is bounded on \ $\Gamma$ \ the limit of \ $f$ \ in such a point is zero, and so it is for \ $\tilde{f}$.\\
  If \ $(u_0,v_0,w_0) \in \Delta^0$ \ then we have \ $v_0 = w_0 = 0$ \ and the function \ $f$ \ is bounded, hence \ $\tilde{f}$ \ tends to \ $0$ \ in such a point.\\
  
  Let us finally show that  \ $\Omega$ \ is the set where \ $\tilde{f}$ \ is strictly positive. It is sufficient to check that \ $v.w \not= 0$ \ on \ $\Omega$. But  \ $v.w = 0$ \ entails \ $x.y = 0$ \ and so \ $v = w = 0$ \ and \ $u = x^2 + y^2 + z^2 - \varepsilon^2$, in other words, \ $(u,v,w) \in [-\varepsilon^2,0]\times (0,0) = \Delta^0$. Hence such \ $(v,w)$\ belongs to \ $\partial \Omega$.   
  \bigskip
  
  We have now constructed a subanalytic function \ $\tilde{f}$ \  on \ $\mathbb{R}^3$ \ which is continuous and strictly positive exactly on \ $\Omega \subset\subset \mathbb{R}^3$. By Corollary \ref{C2} there exists a positive integer \ $N$ \ such that \ $\tilde{f}^N$ \ is of class \ $\mathscr{C}^2$. Then one gets a Stein open subanalytic set of \ $\C^3$ \ which cuts \ $\mathbb{R}^3$ \ exactly on \ $\Omega$ \ as in the general proof of Theorem \ref{T:2}. \end{proof}

\subsection*{Acknowledgements}
The authors gratefully acknowledge the support of FCT and FEDER, within the
project ISFL-1-143 of the Centro de \'Algebra da Universidade de Lisboa.

  \newpage


\begin{thebibliography}{15}
\bibitem{BM}
E. Bierstone and P. D. Milman, {\em Semi-analytic and subanalytic sets}, Publications Math\' ematiques de l'IH\' ES, \textbf{67}, 5-42 , (1988).
\bibitem{BDL}
J. Bolte, A. Danilidis and A. Lewis, {\em The {\L}ojasiewicz inequality for nonsmooth subanalytic functions with applications to subgradient dynamical systems}, Siam J. Optim.,  \textbf{17}, 4, 1205-1223, (2007).
\bibitem{MC}
M. Coste, {\em An introduction to o-minimal geometry},  Dip. Mat. Univ. Pisa, 
Dottorato di Ricerca in Matematica, Istituti Editoriali e Poligrafici Internazionali, Pisa (2000).
\bibitem{DM}
L. van den Dries and C. Miller, {\em Geometric categories and o-minimal structures}, 2001; revised version of the paper with the same name which appeared in Duke Math. J.,  497-540,  (1996).
\bibitem{G}
H. Grauert, {\em On Levi's Problem and the embedding of real analytic manifolds}, Annals of Math., \textbf{68}, 2, 460-472 (1958).

\bibitem{GR} R. Gunning and H. Rossi, {\em Analytic functions of several complex variables}, AMS Chelsea Publishing, (2009).
\bibitem{HH1}
H. Hironaka, {\em Subanalytic sets}, Algebraic Geometry and Commutative Algebra, Tokyo, Kinokuniya, 453-493, (1963).
\bibitem{HH2} H. Hironaka,
 {\em Introduction aux ensembles sousanalytiques}, Ast\'erisque (Soc. Math. France), \textbf{7-8}, 13-20, (1973).
 \bibitem{K} M. Kankaanrinta, {\em The simplicity of certain groups of subanalytic diffeomorphisms}, Differential Geometry and its Applications, 
 \textbf{27}, 661-670,  (2009).
 
 \bibitem{KK}
 K. Kurdyka, {\em Points r\'eguliers d'un sous-analytique}, Annales de l'Inst. Fourier, \textbf{38}, 1, 133-156, (1988).
 
 \bibitem{KS}
 M. Kashiwara and P. Schapira, {\em Ind-sheaves},  Ast\'erisque, Soc. Math. France, \textbf{271} (2001). 
 
 
\bibitem{L}
S. {\L}ojasiewicz,
 {\em Sur le probl\`eme de la division}, Studia Math., \textbf{8}, 87-136, (1959).
 \bibitem{S} M. Shiota, {\em Geometry of subanalytic and semialgebraic sets}, Progress in Mathematics, \textbf{150}, (1997).
 \bibitem{LP}
 L. Prelli,  {\em Sheaves on subanalytic sites}, Rendiconti del Seminario Matematico
dell'Universit\`a di Padova, \textbf{120},   (2008).

 
 \bibitem{T}
 M. Tamm, {\em Subanalytic sets in calculus of variations}, Acta Mathematica, \textbf{146}, 167-199 (1991).
 \bibitem{SV} W. Schmid and K. Vilonen, {\em Characteristic
cycles of constructible sheaves},  Invent. Math. \textbf{124}, 451--502 (1996).
\end{thebibliography}
\end{document}